
\documentclass[12pt]{amsart}
\usepackage{latexsym}
\usepackage{euscript}
\usepackage{amsfonts}
\usepackage{amssymb}

\textwidth 15truecm
\textheight 22truecm
\hoffset=-1.2truecm


\theoremstyle{plain}
\newtheorem{thm}{Theorem}
\newtheorem{Main}{Theorem\hskip -0.1ex}

\newtheorem{lem}[thm]{Lemma}

\newtheorem{prop}[thm]{Proposition}

\theoremstyle{definition}

\newtheorem{rem}[thm]{Remark}

\errorcontextlines=0


\def\for{\text{for }}

\def\norm#1{\|#1\|}
\def\pref#1{(\ref{#1})}
\def\cx{C(X)}
\def\alg{\ifmmode {\mathcal A} \else ${\mathcal A}$\fi}
\def\lmu{L^{\infty}(\mu)}

\title {Fibers of $L^{\infty}$ algebra.}
\author{Marek Kosiek}
\address{Instytut Matematyki, Uniwersytet Jagiello\'nski, \L ojasiewicza 6,
30-348, Krak\'ow, Poland}
\email{Marek.Kosiek@im.uj.edu.pl}

\keywords{function algebra, measure, $L^{\infty}$ algebra, fiber}
\subjclass[2010]{Primary: 46J10; Secondary: 46E30, 28A20}

\begin{document}

\begin{abstract}
It is shown that Gelfand transforms of elements $f\in\lmu$ are constant at almost every fiber $\Pi^{-1}(\{x\})$
of the spectrum of $\lmu$ in the following sense:
for each $f\in\lmu$ there is an open dense subset $U=U(f)$ of this spectrum having full measure and
such that the Gelfand transform of $f$ is constant on the intersection $\Pi^{-1}(\{x\})\cap U$.
\end{abstract}

\maketitle

The proof of the main
result bases on topological and measure properties of the
spectrum of $\lmu$,
(see \cite{D}, \cite{G} I.9).
This result is related to certain techniques connected with studying
abstract approach to A-measures problem and corona problem.

In this note we consider $\cx$, the algebra of all complex-valued continuous
functions on a compact space $X$. Moreover we assume
\begin{itemize}
\item[($*$)]
$\mu$ is a regular Borel probabilistic measure on
$X$ such that $X$ is equal to the closed support of $\mu$
\end{itemize}
The set $\lmu$ of equivalence classes $[f]$ of $[\mu]$ essentially bounded measurable functions $f$ on $X$
is a commutative C*-algebra under standard operations.

Let $Y$ be the spectrum of $\lmu$.
By Gelfand-Naimark theorem, $\lmu$ is isometrically isomorphic
(by the Gelfand transform $[f]\to\widehat{[f]}$) to $C(Y)$.

Let $y\in Y$ and define a functional $\Pi_y$ on $\cx$ as follows:
\begin{equation}\label{Pi}
\Pi_y(f):=\widehat{[f]}(y)\quad\for\ f\in\cx.
\end{equation}
Since
$$\Pi_y(fg)=\widehat{[fg]}(y)=\widehat{[f][g]}(y)=(\widehat{[f]}\widehat{[g]})(y)=\widehat{[f]}(y)\widehat{[g]}(y)
=\Pi_y(f)\Pi_y(g),$$
we conclude that $\Pi_y$ is a linear-multiplicative functional on $\cx$, so it can be identified with
some point in $X$. Using this identification we can write $f(\Pi_y)=\Pi_y(f)$ for $f\in\cx$. Consider the mapping $\Pi:y\to\Pi_y$ and
observe that $f\circ\Pi=\widehat{[f]}$ for $f\in\cx$. Hence $f\circ\Pi$ is a continuous function on $Y$
for each $f\in\cx$. Consequently,
since Gelfand topologies on $X$ and $Y$ are equal to the restrictions of
the weak-star topologies to $X$ and $Y$ respectively, we have the following

\begin{lem}\label{pi}
Projection $\Pi:Y\to X$ is continuous.
\end{lem}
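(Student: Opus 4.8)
The plan is to reduce the continuity of $\Pi$ to the continuity of the scalar functions $y\mapsto\Pi_y(f)$, exploiting the fact already recorded above that the topology on $X$ is the restriction of the weak-star topology of the dual of $\cx$. Concretely, I would view $X$ as a subset of the dual space $\cx^{*}$, where each point $x\in X$ is identified with the evaluation functional $f\mapsto f(x)$, and recall that the weak-star topology on $\cx^{*}$ is by definition the initial topology generated by the evaluation maps $\ell\mapsto\ell(f)$, $f\in\cx$. Since $X$ is compact Hausdorff and the functions in $\cx$ separate the points of $X$, this weak-star topology, restricted to $X$, coincides with the original topology of $X$; this is the assertion quoted in the lines immediately preceding the lemma.

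Granting this, I would invoke the universal property of the initial topology: a map $\Pi\colon Y\to X$ into a space carrying the weak topology generated by a family of functions is continuous if and only if the composition of $\Pi$ with each of the generating functions is continuous. Here the generating functions are exactly the $f\in\cx$, so it suffices to check that $y\mapsto f(\Pi_y)=\Pi_y(f)$ is continuous on $Y$ for every $f\in\cx$.

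But this last step is precisely the identity $f\circ\Pi=\widehat{[f]}$ established just before the statement: for each $f\in\cx$ one has $\Pi_y(f)=\widehat{[f]}(y)$, and $\widehat{[f]}$ is a continuous function on $Y$ by the Gelfand--Naimark theorem, since it lies in $C(Y)$. Hence every composition $f\circ\Pi$ is continuous, and the reduction above yields the continuity of $\Pi$. Equivalently, phrased with nets: if $y_{\alpha}\to y$ in $Y$, then for each $f\in\cx$ we get $\Pi_{y_{\alpha}}(f)=\widehat{[f]}(y_{\alpha})\to\widehat{[f]}(y)=\Pi_y(f)$, which is exactly weak-star convergence $\Pi_{y_{\alpha}}\to\Pi_y$ in $X$.

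I expect essentially no serious obstacle here, as all the analytic content has been front-loaded into the observation $f\circ\Pi=\widehat{[f]}$ together with the Gelfand--Naimark identification $\lmu\cong C(Y)$. The only point that deserves any care is the justification that the topology on $X$ is genuinely the weak topology induced by $\cx$, so that the universal property may be applied; this rests on the compactness of $X$ and the separation of points by $\cx$, i.e.\ the standard identification of a compact Hausdorff space with the spectrum of its algebra of continuous functions.
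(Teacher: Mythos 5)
Your argument is correct and is essentially the paper's own: the lemma is justified there by the immediately preceding observation that $f\circ\Pi=\widehat{[f]}\in C(Y)$ for every $f\in\cx$, combined with the fact that the topology of $X$ is the weak-star (Gelfand) topology induced by $\cx$. You merely make explicit the universal property of the initial topology, which the paper leaves implicit.
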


Let us consider the sequence of mappings
\begin{equation}\label{ff}
\cx\ni f\to[f]\to\widehat{[f]}\in C(Y).
\end{equation}
By the assumption ($*$), the first mapping is an isometry into $\lmu$.
The last one is the Gelfand transform: $\lmu\to C(Y)$ which is also an isometry. Hence, by \pref{Pi} we have for $f\in\cx$
\begin{equation}\label{normy}
\sup_{x\in X} |f(x)|=\norm f=\norm{\widehat{[f]}}=\sup_{x\in\Pi(Y)} |f(x)|.
\end{equation}
By Lemma \ref{pi}, the set $\Pi(Y)$ is compact, and hence closed in $X$ which by
\pref{normy} implies that $\Pi(Y)$ contains Shilov boundary of $\cx$. Consequently $\Pi(Y)$ must be equal to $X$.
So we have
\begin{prop}\label{sub}\hfill
\begin{enumerate}
\item
Up to the isometric equivalence given by \pref{ff}, $\cx$ can be considered as a closed subalgebra of $\lmu$.
\item
Each element $x\in X$ as a linear-multiplicative functional on $\cx$ has a linear-multiplicative extension $y:[f]\to\widehat{[f]}(y)$
to the whole $\lmu$.
\end{enumerate}
\end{prop}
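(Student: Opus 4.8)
The plan is to read off both assertions from material already assembled in the excerpt; the argument is therefore organizational rather than computational.

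For (1), I would note that the first map in \pref{ff}, $f\mapsto[f]$, is an algebra homomorphism: it is linear, and it is multiplicative since $[fg]=[f][g]$ by the definition of the product in $\lmu$. Hence its range is a subalgebra of $\lmu$. Hypothesis ($*$) makes this map an isometry onto its range, so that range is isometrically isomorphic to $\cx$. Because $\cx$ is complete and an isometric image of a complete space is complete, the range is a complete, hence closed, subalgebra of $\lmu$. This gives (1).

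For (2), I would use the equality $\Pi(Y)=X$ established just above by the Shilov-boundary argument resting on \pref{normy}. Fix $x\in X$. By surjectivity of $\Pi$ pick $y\in Y$ with $\Pi_y=x$. By the definition of $Y$, the point $y$ is a linear-multiplicative functional on $\lmu$, acting by $[g]\mapsto\widehat{[g]}(y)$; the content of the claim is that, under the embedding $f\mapsto[f]$, this functional restricts on $\cx$ to evaluation at $x$. For $f\in\cx$ I compute, using the definition \pref{Pi} of $\Pi_y$ together with the identification of $\Pi_y$ with the point $x$,
$$y([f])=\widehat{[f]}(y)=\Pi_y(f)=f(x).$$
Thus $y$ is the desired linear-multiplicative extension of $x$ to $\lmu$.

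The step requiring the most care is the last identification in (2): one must keep the two spectra $X$ and $Y$ and their respective weak-star topologies distinct while verifying that the abstract functional $y\in Y$ genuinely restricts to the concrete point evaluation $x\in X$. All the substantive work—continuity of $\Pi$ (Lemma \ref{pi}), its norm-preserving action on $\cx$ \pref{normy}, and the resulting surjectivity $\Pi(Y)=X$—has already been done, so no new estimates are needed and the proposition follows by assembling these facts.
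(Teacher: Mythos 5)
Your proposal is correct and follows essentially the same route as the paper: part (1) is exactly the paper's observation that ($*$) makes $f\mapsto[f]$ an isometry (hence its image, being complete, is a closed subalgebra), and part (2) rests on the surjectivity $\Pi(Y)=X$ obtained from \pref{normy} via the Shilov boundary of $\cx$, which is precisely the paper's argument preceding the proposition. No gaps.
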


From now on we will not distinguish in writing Borel, $[\mu]$ essentially bounded functions on $X$ from their
equivalence classes in $\lmu$.
By the above consideration, if $f\in\cx$ then $\widehat f$ is constant on each fiber $\Pi^{-1}(\{x\})$ for $x\in X$.

Since we identify $\lmu$ with  $C(Y)$, Riesz Representation Theorem
gives a regular positive Borel measure
$\tilde\mu$ on $Y$ ''representing $\mu$'' i.e. such
 that $\norm{\tilde\mu}=\norm{\mu}$ and
\begin{equation}\label{tmu}
\int f\,d\mu=\int\hat f\,d\tilde\mu\quad \for\ f\in \lmu.
\end{equation}

For any Borel $E\subset X$ its characteristic function
$\widehat{\chi_E}$ as an idempotent in $C(Y)$ is of the form
$\chi_{U_E}$, thus assigning a closed-open set $U_E$ in $Y$ to any measurable
$E\subset X$.
Applying \pref{tmu} to $\chi_E$ we get for any Borel subset $E$ of X the equality
\begin{equation}\label{emu}
\mu(E)=\tilde\mu(U_E).
\end{equation}
Moreover (Lemma 9.1 and Corollary 9.2 of \cite{G}) we have

\begin{lem}\label{gam1}
The family $\{U_E: E\subset Y,\ E\ \text{measurable}\}$ form a basis for the topology of $Y$.
If $U$ is an open non-empty subset of $Y$, then $\tilde\mu(U)>0$.
\end{lem}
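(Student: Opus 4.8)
The plan is to exploit the Boolean-algebra structure of the family $\mathcal{B} = \{U_E : E\subset X \text{ measurable}\}$ together with the compactness of $Y$. First I would record the algebraic identities that follow from the fact that $\chi_E\mapsto\widehat{\chi_E}=\chi_{U_E}$ is additive and multiplicative on idempotents: namely $U_{E\cap F}=U_E\cap U_F$, $U_{X\setminus E}=Y\setminus U_E$, and $U_{E\cup F}=U_E\cup U_F$, with $U_X=Y$ and $U_\emptyset=\emptyset$. Thus $\mathcal{B}$ is closed under finite intersections, finite unions, and complements, and in particular each of its members is clopen in $Y$.

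For the basis claim, the crucial intermediate step is that $\mathcal{B}$ separates the points of $Y$. Here I would use that the simple functions are norm-dense in $\lmu$; since the Gelfand transform is an isometric isomorphism onto $C(Y)$, finite linear combinations of the $\chi_{U_E}$ are dense in $C(Y)$. Given distinct characters $y_1,y_2\in Y$, their automatic continuity forces them to disagree on some $\chi_E$, whence $\chi_{U_E}(y_1)\neq\chi_{U_E}(y_2)$; as these values lie in $\{0,1\}$, exactly one of $y_1,y_2$ belongs to $U_E$. This is the step I expect to carry the real content, since it is where the measure-theoretic input enters; everything else is formal once separation is in hand.

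With separation established, the basis property follows from a routine compactness argument. Fix an open set $W\subset Y$ and a point $y_0\in W$. For each $z$ in the compact set $F:=Y\setminus W$ choose, using separation and closure under complement, a set $U_{E_z}\in\mathcal{B}$ with $y_0\in U_{E_z}$ and $z\notin U_{E_z}$. The clopen sets $Y\setminus U_{E_z}$ then cover $F$, so finitely many $z_1,\dots,z_n$ suffice; the intersection $\bigcap_{i=1}^n U_{E_{z_i}}$ lies in $\mathcal{B}$, contains $y_0$, and is contained in $W$. Hence $\mathcal{B}$ is a basis for the topology of $Y$.

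For the final assertion I would reduce to basic open sets: any nonempty open $U$ contains some nonempty $U_E\in\mathcal{B}$ by the basis property, and $\tilde\mu(U)\geq\tilde\mu(U_E)$ by monotonicity. Now $U_E\neq\emptyset$ means $\chi_{U_E}=\widehat{\chi_E}\neq 0$ in $C(Y)$, so by the isometry $\norm{\chi_E}=1$ and hence $E$ is not $\mu$-null. Invoking the identity $\mu(E)=\tilde\mu(U_E)$ from \pref{emu} gives $\tilde\mu(U_E)=\mu(E)>0$, and therefore $\tilde\mu(U)>0$.
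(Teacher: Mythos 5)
The paper offers no proof of this lemma at all --- it defers to Lemma 9.1 and Corollary 9.2 of Gamelin's \emph{Uniform Algebras} --- and your argument is correct, complete, and essentially the standard proof found there: density of simple functions in $\lmu$ gives point-separation of $Y$ by the clopen sets $U_E$, compactness upgrades separation (plus closure under Boolean operations) to the basis property, and the identity $\mu(E)=\tilde\mu(U_E)$ from \pref{emu} together with the isometry of the Gelfand transform yields positivity of $\tilde\mu$ on nonempty open sets. You also silently correct the statement's typo ``$E\subset Y$'' to ``$E\subset X$'', which is clearly the intended reading.
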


\begin{lem}\label{chi}
If $E,F$ are Borel subsets of $X$, and $E\subset F$
then $\widehat{\chi_E}\leqslant\widehat{\chi_F}$ and $U_E\subset U_F$.
\end{lem}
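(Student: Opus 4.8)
The plan is to exploit the fact, already used in the excerpt, that the Gelfand transform $[f]\mapsto\widehat{[f]}$ is an isometric $*$-isomorphism of the commutative C*-algebra $\lmu$ onto $C(Y)$, so that it preserves both the multiplicative and the order structure. Two short routes are available, and I would combine them, handling the set inclusion first and the pointwise inequality second.

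First I would establish $U_E\subset U_F$ using multiplicativity. Since $E\subset F$ we have $E\cap F=E$, hence $\chi_E\chi_F=\chi_{E\cap F}=\chi_E$ in $\lmu$. Applying the Gelfand transform, which is multiplicative, yields $\widehat{\chi_E}\,\widehat{\chi_F}=\widehat{\chi_E}$. Recalling that $\widehat{\chi_E}=\chi_{U_E}$ and $\widehat{\chi_F}=\chi_{U_F}$ as explained before the lemma, this reads $\chi_{U_E}\chi_{U_F}=\chi_{U_E}$, that is $\chi_{U_E\cap U_F}=\chi_{U_E}$. Because the Gelfand transform is injective, equal idempotents come from equal characteristic functions, so the underlying sets coincide: $U_E\cap U_F=U_E$, which is precisely $U_E\subset U_F$.

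For the inequality $\widehat{\chi_E}\leqslant\widehat{\chi_F}$ I would then simply note that $U_E\subset U_F$ forces $\chi_{U_E}\leqslant\chi_{U_F}$ pointwise on $Y$, i.e.\ $\widehat{\chi_E}\leqslant\widehat{\chi_F}$. Independently, one can argue directly by positivity: the difference $\chi_F-\chi_E=\chi_{F\setminus E}$ is a nonnegative element of $\lmu$, and a $*$-isomorphism of C*-algebras carries positive elements to positive elements, so $\widehat{\chi_F}-\widehat{\chi_E}=\widehat{\chi_{F\setminus E}}\geqslant 0$ in $C(Y)$, giving the claim at once.

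The argument is brief, and the only point that genuinely requires care is the passage from equality of idempotents (equivalently, of characteristic functions) back to equality of the corresponding subsets of $Y$; this is exactly where the injectivity of the Gelfand transform is essential, since it guarantees that the assignment $E\mapsto U_E$ is well defined and that distinct closed-open sets of $Y$ correspond to distinct idempotents of $C(Y)$. Once this is observed, both assertions of the lemma follow immediately.
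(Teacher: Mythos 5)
Your argument is correct and rests on the same key identity as the paper's proof, namely $\chi_E=\chi_E\chi_F$ together with the multiplicativity of the Gelfand transform; you merely derive the set inclusion first and the pointwise inequality second, whereas the paper does the reverse, and your extra appeal to injectivity of the Gelfand transform is unnecessary (once $\chi_{U_E}\chi_{U_F}=\chi_{U_E}$ holds as functions on $Y$, the equality $U_E\cap U_F=U_E$ is immediate by pointwise evaluation). The alternative positivity argument you sketch is also valid but is not needed.
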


\begin{proof}
If $E\subset F$ then $\chi_E=\chi_E\cdot\chi_F$. Hence
$\widehat{\chi_E}=\widehat{\chi_E}\cdot\widehat{\chi_F}$
which means that $\widehat{\chi_E}\leqslant\widehat{\chi_F}$.
Since $\chi_{U_E}=\widehat{\chi_E}$ and
$\chi_{U_F}=\widehat{\chi_F}$, we have $U_E\subset U_F$.
\end{proof}

\begin{lem}\label{open}
If $E\subset X$ is open then $\Pi^{-1}(E)\subset U_E$ and
$\chi_{\Pi^{-1}(E)}\leqslant\widehat{\chi_E}$.
If $E\subset X$ is closed then $\Pi^{-1}(E)\supset U_E$ and
$\chi_{\Pi^{-1}(E)}\geqslant\widehat{\chi_E}$.
\end{lem}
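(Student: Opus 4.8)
The plan is to prove the open case directly and then deduce the closed case by passing to complements. Throughout I use the two facts already in hand: that $f\circ\Pi=\widehat{[f]}$ for every $f\in\cx$, and that $\widehat{\chi_E}=\chi_{U_E}$ for Borel $E$. Because $\widehat{\chi_E}=\chi_{U_E}$, each of the two function inequalities in the statement is a formal consequence of the corresponding set inclusion (for characteristic functions, $A\subset B$ gives $\chi_A\leqslant\chi_B$). Hence it suffices to establish the set inclusions $\Pi^{-1}(E)\subset U_E$ in the open case and $\Pi^{-1}(E)\supset U_E$ in the closed case.

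For the open case, I would fix $y\in\Pi^{-1}(E)$ and set $x=\Pi(y)\in E$. Since $X$ is compact Hausdorff (which is implicit in identifying the characters of $\cx$ with points of $X$) and $E$ is open, Urysohn's lemma provides $g\in\cx$ with $0\leqslant g\leqslant 1$, $g(x)=1$, and $g$ vanishing off $E$. The last property yields the pointwise identity $g=g\cdot\chi_E$ in $\lmu$, so applying the Gelfand transform gives $\widehat g=\widehat g\cdot\widehat{\chi_E}=\widehat g\cdot\chi_{U_E}$, in the same spirit as the proof of Lemma \ref{chi}. Evaluating at $y$ and using $\widehat g(y)=g(\Pi(y))=g(x)=1$, I obtain $1=\chi_{U_E}(y)$, that is $y\in U_E$. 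This proves $\Pi^{-1}(E)\subset U_E$.

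For the closed case I would apply the open case to $X\setminus E$, which is open, getting $\Pi^{-1}(X\setminus E)\subset U_{X\setminus E}$. The identity $\chi_{X\setminus E}=1-\chi_E$ transforms under the Gelfand map into $\chi_{U_{X\setminus E}}=1-\chi_{U_E}$, i.e.\ $U_{X\setminus E}=Y\setminus U_E$; likewise $\Pi^{-1}(X\setminus E)=Y\setminus\Pi^{-1}(E)$ since $\Pi$ maps into $X$. Substituting both equalities turns the inclusion into $Y\setminus\Pi^{-1}(E)\subset Y\setminus U_E$, which is exactly the desired $U_E\subset\Pi^{-1}(E)$.

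The only delicate point is the open case, and within it the construction of the witnessing function $g$: one needs a continuous function that equals $1$ at the prescribed point yet is supported inside $E$, so that the multiplicative identity $g=g\chi_E$ becomes available and can transport membership through the Gelfand transform. This is precisely where the normality of $X$ enters, via Urysohn's lemma; once the open inclusion is secured, the closed case is purely formal complementation.
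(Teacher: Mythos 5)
Your proposal is correct and follows essentially the same route as the paper: Urysohn's lemma produces $g\in\cx$ with $g=1$ at the given point and $g\leqslant\chi_E$, the identity $g=g\cdot\chi_E$ is pushed through the Gelfand transform to force $\chi_{U_E}=1$ on the relevant fiber, and the closed case is obtained by complementation using $\chi_{X\setminus E}=1-\chi_E$. The only cosmetic difference is that you argue point by point in $\Pi^{-1}(E)$ while the paper argues fiber by fiber over points of $E$, which is the same computation.
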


\begin{proof}
Let $E$ be open in $X$ and $x\in E$. Then there is a continuous function
$f:X\to[0,1]$ such that $f(x)=1$ and $f\leqslant\chi_E$. Hence
$\hat{f}$ is equal 1 on $\Pi^{-1}(\{x\})$ and $f=f\cdot\chi_E$, which implies
$\hat f=\hat f\cdot\widehat{\chi_E}=\hat f\cdot\chi_{U_E}$.
Consequently $\widehat{\chi_{U_E}}$ is equal 1 on $\Pi^{-1}(\{x\})$ which means
that $\Pi^{-1}(\{x\})\subset U_E$. Since $x$ was an arbitrary point of $E$, we have
$\Pi^{-1}(E)\subset U_E$.
Then also $\chi_{\Pi^{-1}(E)}\leqslant\chi_{U_E}=\widehat{\chi_E}$.

If $E$ is closed then $X\setminus E$ is open and $\chi_E\cdot\chi_{X\setminus E}=0$,
$\chi_E+\chi_{X\setminus E}=1$. Consequently
$\chi_{U_E}\chi_{U_{X\setminus E}}=\widehat{\chi_E}\cdot\widehat{\chi_{X\setminus E}}=0$ and
$\chi_{U_E}+\chi_{U_{X\setminus E}}=\widehat{\chi_E}+\widehat{\chi_{X\setminus E}}=1$.
It means that $U_E\cap U_{X\setminus E}=\emptyset$ and $U_E\cup U_{X\setminus E}=Y$
which implies the desired statement for closed sets.
\end{proof}

\begin{rem}
Till now the regularity of $\mu$ has not been used.
\end{rem}

\begin{lem}\label{ue}
If $E$ is a Borel subset of $X$ then
\begin{equation}\label{eue}
\mu(E)=\tilde{\mu}({\Pi^{-1}(E)})=\tilde{\mu}(U_E).
\end{equation}
If $E\subset X$ is open then $\overline{\Pi^{-1}(E)}=U_E$.
If $E\subset X$ is closed then $\text{int}(\Pi^{-1}(E))=U_E$.
\end{lem}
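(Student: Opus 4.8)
The plan is to prove the measure identity \pref{eue} first and then derive the two topological identities from it together with the basis property recorded in Lemma~\ref{gam1}. Throughout, observe that $\Pi^{-1}(E)$ is Borel in $Y$ whenever $E$ is Borel in $X$, since $\Pi$ is continuous by Lemma~\ref{pi}; thus $\tilde\mu(\Pi^{-1}(E))$ makes sense. The equality $\mu(E)=\tilde\mu(U_E)$ is already \pref{emu}, so the entire content of \pref{eue} is the middle equality $\tilde\mu(\Pi^{-1}(E))=\tilde\mu(U_E)$, and this is exactly where the regularity of $\mu$, unused up to now, enters.

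I would establish \pref{eue} in three stages. For $E$ open, Lemma~\ref{open} gives $\Pi^{-1}(E)\subset U_E$, hence $\tilde\mu(\Pi^{-1}(E))\le\tilde\mu(U_E)=\mu(E)$. For the reverse inequality I would use inner regularity: for each compact $K\subset E$ the set $K$ is closed, so Lemma~\ref{open} yields $U_K\subset\Pi^{-1}(K)\subset\Pi^{-1}(E)$, whence $\mu(K)=\tilde\mu(U_K)\le\tilde\mu(\Pi^{-1}(E))$; taking the supremum over such $K$ gives $\mu(E)\le\tilde\mu(\Pi^{-1}(E))$ and therefore equality. The closed case follows by complementation: for any Borel $E$ one has $U_{X\setminus E}=Y\setminus U_E$ (apply the Gelfand transform to $\chi_E+\chi_{X\setminus E}=1$ and $\chi_E\chi_{X\setminus E}=0$), while $\Pi^{-1}(X\setminus E)=Y\setminus\Pi^{-1}(E)$; combined with $\tilde\mu(Y)=\mu(X)=1$ and the open case applied to $X\setminus E$, this gives $\tilde\mu(\Pi^{-1}(E))=\mu(E)$ for closed $E$. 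Finally, for an arbitrary Borel $E$ I would sandwich by regularity: given $\varepsilon>0$ choose closed $K$ and open $V$ with $K\subset E\subset V$ and $\mu(V\setminus K)<\varepsilon$; monotonicity of $E\mapsto\tilde\mu(\Pi^{-1}(E))$ together with the two special cases places both $\mu(E)$ and $\tilde\mu(\Pi^{-1}(E))$ in the interval $[\mu(K),\mu(V)]$ of length $<\varepsilon$, and letting $\varepsilon\to0$ yields \pref{eue}.

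For the closure statement with $E$ open, the inclusion $\overline{\Pi^{-1}(E)}\subset U_E$ is immediate, since $U_E$ is closed and contains $\Pi^{-1}(E)$. For the reverse, suppose $U_E\setminus\overline{\Pi^{-1}(E)}$ were nonempty; being open, it would satisfy $\tilde\mu(U_E\setminus\overline{\Pi^{-1}(E)})>0$ by Lemma~\ref{gam1}. But $\Pi^{-1}(E)\subset\overline{\Pi^{-1}(E)}\subset U_E$ together with \pref{eue} forces $\tilde\mu(\overline{\Pi^{-1}(E)})=\tilde\mu(U_E)$, a contradiction; hence $U_E\subset\overline{\Pi^{-1}(E)}$ and equality holds. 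The interior statement for closed $E$ then follows by taking complements: $\text{int}(\Pi^{-1}(E))=Y\setminus\overline{\Pi^{-1}(X\setminus E)}$, and applying the open case to the open set $X\setminus E$ together with $U_{X\setminus E}=Y\setminus U_E$ gives $\text{int}(\Pi^{-1}(E))=U_E$.

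The main obstacle is the open-set case of \pref{eue}: the inclusion $\Pi^{-1}(E)\subset U_E$ is typically strict, so one cannot obtain $\tilde\mu(\Pi^{-1}(E))=\mu(E)$ directly and must genuinely invoke inner regularity to fill the gap $U_E\setminus\Pi^{-1}(E)$ up to measure zero. Once the measure identity is secured, the two topological equalities are soft consequences of the fact (Lemma~\ref{gam1}) that nonempty open subsets of $Y$ carry positive $\tilde\mu$-measure.
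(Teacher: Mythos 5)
Your proof is correct, but the route to \pref{eue} is genuinely different from the paper's. The paper runs the regularity sandwich $K\subset E\subset V$ once, for a general Borel $E$, and bridges the two spaces with a Urysohn function $f\in\cx$ satisfying $\chi_K\leqslant f\leqslant\chi_V$: continuity gives $\chi_{\Pi^{-1}(K)}\leqslant\hat f\leqslant\chi_{\Pi^{-1}(V)}$, and the representing identity \pref{tmu} equates $\int f\,d\mu$ with $\int\hat f\,d\tilde\mu$, trapping $\mu(E)$ and $\tilde\mu(\Pi^{-1}(E))$ within $\varepsilon$ of a common value. You avoid continuous functions at this stage entirely: you settle the open case from the inclusions of Lemma \ref{open} (namely $\Pi^{-1}(E)\subset U_E$ for open $E$ and $U_K\subset\Pi^{-1}(K)$ for compact $K$) together with \pref{emu} and inner regularity, obtain the closed case by complementation, and only then sandwich a general Borel set between the two special cases. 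The arguments are close cousins --- the Urysohn construction you skip is precisely what powers Lemma \ref{open}, on which you lean instead --- but yours makes explicit that once that lemma and \pref{emu} are available, no further appeal to \pref{tmu} is needed, at the cost of a three-stage case analysis where the paper has a single uniform estimate. For the topological identities your argument is the paper's with the roles of the two cases interchanged: the paper proves $\text{int}(\Pi^{-1}(E))=U_E$ for closed $E$ directly and complements to get the open case, whereas you prove $\overline{\Pi^{-1}(E)}=U_E$ for open $E$ directly and complement for the closed case; both hinge on the same use of Lemma \ref{gam1}, that a nonempty open difference set would carry positive $\tilde\mu$-measure.
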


\begin{proof}
By the regularity of $\mu$, for any $\varepsilon>0$ we can find a compact set $K\subset X$ and an open set $V\subset X$ such that
$K\subset E\subset V$ and $\mu(V\setminus K)<\varepsilon$. Also there exists $f\in\cx$ such that $\chi_K\leqslant f\leqslant\chi_V$.
By the continuity of $f$ we have $\chi_{\Pi^{-1}(K)}\leqslant \hat f\leqslant\chi_{\Pi^{-1}(V)}$.
(Proposition \ref{sub} and the consideration following it).
Hence $|\mu(E)-\int f\,d\mu|<\varepsilon$
and $|\tilde\mu(\Pi^{-1}(E))-\int \hat f\,d\tilde\mu|<\varepsilon$ which by \pref{tmu} and free choice of $\varepsilon$ gives
$\mu(E)=\tilde\mu(\Pi^{-1}(E))$. The second equality in \pref{eue} we get by \pref{emu}.

If $E$ is closed then $U_E\subset\Pi^{-1}(E)$ by Lemma \ref{open}. So $U_E\subset\text{int}(\Pi^{-1}(E))$ and
$\text{int}(\Pi^{-1}(E))\setminus U_E$ is open since $U_E$ is closed-open.
Consequently $\text{int}(\Pi^{-1}(E))=U_E$ by Lemma \ref{gam1}.

The assertion for open sets follows from the equalities $\text{int}(\Pi^{-1}(E))=Y\setminus\overline{\Pi^{-1}(X\setminus E)}$ and
$U_E=Y\setminus U_{X\setminus E}$.
\end{proof}

\begin{Main}
If $\mu$ is a measure satisfying ($*$), $Y$ is the spectrum of $\lmu$, and $h\in\lmu$,  then there exists an open dense subset
$U$ of $Y$ with $\tilde{\mu}(U)=\tilde{\mu}(Y)$ such that
$\hat h$ is constant on $\Pi^{-1}(\{x\})\cap U$ for all $x\in X$.
\end{Main}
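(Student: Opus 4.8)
\emph{Plan.} The whole argument will be organized around comparing the continuous function $\hat h$ with the (generally discontinuous) function $h\circ\Pi$ on $Y$, and showing that the open set on which they agree already does the job.

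\emph{Step 1 (an almost-everywhere identity).} I would first show that for every Borel $E\subset X$ one has $\tilde\mu\big(U_E\,\triangle\,\Pi^{-1}(E)\big)=0$, i.e. $\widehat{\chi_E}=\chi_E\circ\Pi$ holds $\tilde\mu$-almost everywhere. Indeed, Lemma \ref{ue} gives $\tilde\mu(\Pi^{-1}(E))=\tilde\mu(U_E)=\mu(E)$, while Lemma \ref{open} supplies the inclusion $U_E\subset\Pi^{-1}(E)$ for closed $E$ and $\Pi^{-1}(E)\subset U_E$ for open $E$; equal measure together with an inclusion forces the symmetric difference to be null in those two cases, and the general Borel case follows by squeezing $K\subset E\subset V$ ($K$ compact, $V$ open) via regularity. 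By linearity the identity passes to simple functions, and since simple functions are $\norm{\cdot}$-dense in $\lmu$ and $\mu$-null sets pull back under $\Pi$ to $\tilde\mu$-null sets (Lemma \ref{ue}), approximating $h$ uniformly by simple functions yields $\hat h=h\circ\Pi$ $\tilde\mu$-almost everywhere.

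\emph{Step 2 (the candidate set).} Set $N:=\{y\in Y:\hat h(y)\neq h(\Pi y)\}$ and $U_0:=Y\setminus N$; by Step 1, $\tilde\mu(N)=0$. Since $h\circ\Pi$ factors through $\Pi$ it is constant on every fiber $\Pi^{-1}(\{x\})$, so $\hat h$ is automatically constant on each $\Pi^{-1}(\{x\})\cap U_0$. The set I propose is $U:=\text{int}(U_0)=Y\setminus\overline{N}$: it is open, it inherits the constancy property because $U\subset U_0$, and it is dense and of full measure \emph{exactly} when $\tilde\mu(\overline N)=0$. One checks that this candidate is essentially forced: if $U$ is any admissible set, then on each fiber $U$ lies in a single level set $\{\hat h=c_x\}$, and $\tilde\mu(U)=\tilde\mu(Y)$ together with Step 1 forces $c_x=h(x)$ for $\tilde\mu$-almost every relevant fiber, i.e. $U\subset U_0$ up to a null set.

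\emph{Step 3 (the obstacle: $\overline N$ is null).} Everything thus reduces to showing that the null set $N$ has null closure. This is where I expect the real difficulty, since for a general compact space the closure of a null set can be all of $Y$. The plan is to use, beyond the elementary lemmas above, the structural fact that the spectrum $Y$ of $\lmu$ is hyperstonean and that $\tilde\mu$ is a \emph{normal} measure, so that $\tilde\mu(\overline O)=\tilde\mu(O)$ for every open $O\subset Y$ (equivalently, closed nowhere-dense sets are $\tilde\mu$-null), a property available from the structure of the spectrum (\cite{D}, \cite{G} I.9). Granting this, outer regularity of $\tilde\mu$ gives, for each $\varepsilon>0$, an open $O_\varepsilon\supset N$ with $\tilde\mu(O_\varepsilon)<\varepsilon$; then $\overline N\subset\overline{O_\varepsilon}$ and $\tilde\mu(\overline N)\le\tilde\mu(\overline{O_\varepsilon})=\tilde\mu(O_\varepsilon)<\varepsilon$, whence $\tilde\mu(\overline N)=0$. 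Finally $\overline N$, being closed and null, has empty interior by Lemma \ref{gam1}, so $U=Y\setminus\overline N$ is dense; it is open and of full measure, and by Step 2 $\hat h$ is constant on every $\Pi^{-1}(\{x\})\cap U$. Thus the crux is the normality of $\tilde\mu$ (that $\tilde\mu$ charges no topological boundary); once that structural input is in place, the rest is the soft measure-theoretic argument above.
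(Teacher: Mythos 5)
Your argument is correct in outline, but it takes a genuinely different route from the paper's. The paper never forms the function $h\circ\Pi$: it applies Lusin's theorem to produce continuous $g_n$ agreeing with $h$ on closed sets $Z_n$ with $\mu(X\setminus Z_n)<2^{-n}$, notes that $\hat h=\hat g_n=g_n\circ\Pi$ on the clopen set $U_{Z_n}$ (hence $\hat h$ is fiberwise constant there), and then assembles $U$ as the union of the open sets $U_n:=U_{Z_n}\setminus\Pi^{-1}(Z_1\cup\dots\cup Z_{n-1})$, whose projections $\Pi(U_n)$ are pairwise disjoint, so that each fiber meets at most one piece and the fiberwise constancy survives the union. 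The only ``normality-type'' input that construction needs is the special case $\tilde\mu(\Pi^{-1}(Z)\setminus U_Z)=0$ for closed $Z$, which is exactly Lemma \ref{ue}. Your Step 1 (the a.e.\ identity $\hat h=h\circ\Pi$, obtained from Lemmas \ref{open} and \ref{ue} via characteristic and simple functions) is correct and is in effect a measure-theoretic repackaging of those lemmas; your Step 2 is then immediate. What your approach buys is conceptual economy: the theorem is reduced to the single structural statement that the $\tilde\mu$-null set $N=\{\hat h\ne h\circ\Pi\}$ has $\tilde\mu$-null closure.

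That structural statement is the one point you must not gloss over. The claim that closed nowhere dense subsets of $Y$ are $\tilde\mu$-null (normality of $\tilde\mu$) does \emph{not} follow from the lemmas the paper establishes: Lemma \ref{gam1} gives only the reverse implication (null sets have empty interior), and the example of $[0,1]$ with Lebesgue measure --- where every nonempty open set has positive measure and yet fat Cantor sets are closed, nowhere dense and of positive measure --- shows that some genuinely new property of $Y$ is needed. The fact is true and classical (it is the hyperstonean/normal-measure structure of the spectrum of $\lmu$ from Dixmier's paper, and can be extracted from Gamelin I.9 by showing that every open $W\subset Y$ satisfies $\overline W=U_E$ for some Borel $E$ with $\tilde\mu(U_E)=\tilde\mu(W)$), so your citation is defensible; but as written your proof outsources its entire weight to that reference. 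To match the paper's level of self-containment you would either have to prove normality from Lemma \ref{gam1} and the basis $\{U_E\}$ by an exhaustion argument, or fall back on the Lusin-theorem construction, which is engineered precisely so that Lemma \ref{ue} suffices.
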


\begin{proof}
Let $h\in\lmu$, and let $\varepsilon>0$. By Lusin Theorem there is $g\in C(X)$ with
$\norm g\leqslant\norm h$ and a closed set
$Z\subset X$ such that $\mu(X\setminus Z)<\varepsilon$ while $Z\subset\{g=h\}$.
By Lemma \ref{ue} we have
$$U_Z=\text{int}(\Pi^{-1}(Z)),\quad\tilde\mu(U_Z)=\mu(Z)>1-\varepsilon.$$
Since $Z\subset\{g=h\}$ then $\chi_Z\cdot(g-h)=0$. Consequently $\chi_{U_Z}\cdot(\hat g-\hat h)=\widehat{\chi_Z}\cdot(\hat g-\hat h)=0$
which implies
$$\{\hat g\ne\hat h\}\cap U_Z=\emptyset.$$

Put $Z_1:=Z$ and $\varepsilon=1/2$. Repeating the previous construction we find
a sequence $\{g_n\}\subset\cx$ and a sequence $\{Z_n\}$ of closed
subsets of $X$ such that $Z_n\subset\{g_n=h\}$ and $\mu(X\setminus Z_n)<1/2^n$.
Then $$\tilde{\mu}(U_{Z_n})=\mu(Z_n)>1-1/2^n,\quad \{\hat g_n\ne\hat h\}\cap U_{Z_n}=\emptyset.$$
The last equality implies that $\hat h$ is constant on each $\Pi^{-1}(\{x\})\cap U_{Z_n}$ for all $x\in X$ and $n\in\mathbb N$.
We define a sequence of open sets as follows:
$$U_1:=U_{Z_1},\quad U_n:=U_{Z_n}\setminus\Pi^{-1}(Z_1\cup...\cup Z_{n-1}).$$
By the above definition and Lemma \ref{open}, for $k\in\mathbb N$ we have
$\Pi^{-1}(Z_k)\supset U_{Z_k}\supset U_k$, hence $Z_k\supset \Pi(U_{Z_k})\supset \Pi(U_k)$,
and consequently
\begin{equation}\label{disj}
\Pi(U_n)\cap\Pi(U_m)=\emptyset\quad \for\ n\ne m
\end{equation}
since $\Pi(U_n)\cap Z_k=\emptyset$ for $k<n$.
By Lemma \ref{ue} we have $\tilde\mu(\Pi^{-1}(Z_n)\setminus U_{Z_n})=0$ and hence
\begin{multline}\label{mun}
\tilde\mu(U_n)=\tilde\mu(U_{Z_n}\setminus\Pi^{-1}(Z_1\cup...\cup Z_{n-1}))
=\tilde\mu(\Pi^{-1}(Z_n)\setminus\Pi^{-1}(Z_1\cup...\cup Z_{n-1}))\\
=\tilde\mu(\Pi^{-1}(Z_n\setminus(Z_1\cup...\cup Z_{n-1}))=\mu(Z_n\setminus(Z_1\cup...\cup Z_{n-1})).
\end{multline}
Put now $Z'_1:=Z_1$ and $Z'_n:=Z_n\setminus(Z_1\cup...\cup Z_{n-1})$ for $n>1$.
All the sets $\{Z_n'\}$ are pairwise disjoint and a direct calculation gives the equality
$Z'_n\cup Z'_{n-1}\supset Z_n\setminus(Z_1\cup...\cup Z_{n-2})$
which by induction leads to the assertion
$Z'_1\cup...\cup Z'_{n}\supset Z_n$. Hence, by \pref{mun} and pairwise disjointness of $\{U_n\}$ and $\{Z'_n\}$, we get
\begin{gather*}
\tilde\mu(U_1\cup...\cup U_n)=\tilde\mu(U_1)+...+\tilde\mu(U_n)=\mu(Z'_1)+...+\mu(Z'_n)\\
=\mu(Z'_1\cup...\cup Z'_n)\geqslant\mu(Z_n)>1-1/2^n.
\end{gather*}
Put $U:=\sum_{n=1}^{\infty}U_n$. Hence $U$ is open,
$\tilde\mu(U)=1=\tilde\mu(Y)$, and consequently, by Lemma \ref{gam1}, $U$ is dense in $Y$.
The function $\hat h$ is constant on each $\Pi^{-1}(\{x\})\cap U_n$ for all $x\in X$ and $n\in\mathbb N$ and sets $\Pi(U_n)$,
$n\in\mathbb N$ are pairwise disjoint by \pref{disj}. It means that each fiber $\Pi^{-1}(\{x\})$ intersects at most one of the sets $U_n$.
Hence $\hat h$ is constant on each $\Pi^{-1}(\{x\})\cap U$ for all $x\in X$.

\end{proof}

\begin{rem}
If the closed support of $\mu$ is not equal to $X$ then $\lmu$ is isometrically isomorphic to the algebra
$\{f_|{_{{\text{supp}(\mu)}}}:f\in\lmu\}$. In such a case  $\Pi^{-1}(\{x\})=\emptyset$ for all $x$ outside of
the closed support of $\mu$. Assuming that each function is constant on empty set we conclude that the result of Theorem
holds true also when the closed support of $\mu$ is a proper subset of $X$.
\end{rem}

\bibliographystyle{amsplain}

\ifx\undefined\bysame
\newcommand{\bysame}{\leavevmode\hbox to3em{\hrulefill}\,}
\fi

\end{document}